%
%
%
%
%
\RequirePackage{fix-cm}
\documentclass[smallextended]{svjour3}       
\smartqed  
\usepackage{graphicx}
%
%
\usepackage{amsmath, amsfonts, amssymb}
\usepackage{mathtools}
\usepackage{enumitem}
%
%
%
\begin{document}

\title{Unbounded $\sigma$-order-to-norm continuous and $un$-continuous operators
}


\author{Mina Matin               \and
        Kazem Haghnejad Azar$^*$ \and
        Razi Alavizadeh}


\institute{M. Matin \at
           Department  of  Mathematics  and  Applications, Faculty of Sciences, University of Mohaghegh Ardabili, Ardabil, Iran. \\
           \email{minamatin1368@yahoo.com}                 
           \and
           K. Haghnejad Azar \at
           Department  of  Mathematics  and  Applications, Faculty of Sciences, University of Mohaghegh Ardabili, Ardabil, Iran. \\
           \email{haghnejad@uma.ac.ir}
           \and
           R. Alavizadeh \at
           Department  of  Mathematics  and  Applications, Faculty of Sciences, University of Mohaghegh Ardabili, Ardabil, Iran. \\
           \email{ralavizadeh@uma.ac.ir}
}

\date{Received: date / Accepted: date}

\maketitle

\begin{abstract}
An operator $T $ from a vector lattice $E$ into a normed lattice
$F$ is called unbounded $\sigma$-order-to-norm continuous
whenever $x_{n}\xrightarrow{uo}0$ implies
$\| Tx_{n}\|\rightarrow 0$,
for each sequence $(x_{n})_n\subseteq E$. 
For a net $(x_{\alpha})_{\alpha}\subseteq E$,
if $x_{\alpha}\xrightarrow{un}0$ implies
$Tx_{\alpha}\xrightarrow{un}0$,
then  $T$ is called an unbounded  norm continuous operator.
 In this manuscript, we study some
properties of these classes of operators and their relationships
with the other classes of operators.
\keywords{unbounded $\sigma$-order-to-norm continuous \and $\sigma$-unbounded norm continuous \and  unbounded norm continuous \and $un$-compact}
\subclass{47B65 \and 46B40 \and 46B42}
\end{abstract}
\section{Introduction and Preliminaries}
Let $E$ be a vector lattice and $F$ be a normed lattice. In the second section of  this 
manuscript, we will study and investigate on operators $T:E\rightarrow F$ which carry 
every unbounded order convergent sequence into norm convergent sequence. 
The collection of all unbounded 
$\sigma$-order-to-norm continuous operators will be denoted by
$ L_{uon}^\sigma(E,F)$. We show that under some conditions the modulus of an unbounded $\sigma$-order-to-norm continuous operator exists and belongs to $ L_{uon}^\sigma(E,F)$ and every $\sigma$-$uon$-continuous operator $T$ is a Dunford-Pettis operator.
In the third section, we will introduce a new classification of operators named as $un$-continuous operators and we will investigate on some properties of them and their relationships with other classifications of operators.

To state our results, we need to fix some notations and recall some definitions.
Throughout this paper, $E$ is a vector lattice,   the subset $ E^+=\{x\in E:x\geq 0\}$  is called the positive cone of $E$ and the elements of $E^+$ are called the positive elements of $E$. A subset $A\subseteq E$ is called order bounded if there exists $a,b\in E$ such that $A\subseteq [a,b]$ where $[a,b]=\{x\in E:~a\leq x\leq b\}$. An operator $T:E\rightarrow F$ between two vector lattices is said to be order bounded if it maps order bounded subsets of $E$ to order bounded subsets of $F$.
If $E$ is a normed space, then $ E^{\prime} $ is the topological dual space of $ E $ and
$ T^{\prime}:F'\to E' $ is the  adjoint of operator $T:E\to F$ between two normed space.
A sequence $ (x_n)_n$ in a vector lattice $E$ is 
said to be disjoint whenever $n\neq m$ implies $x_n \perp x_m$.
If $A$ is a 
nonempty   subset of vector lattice $E$, then its disjoint 
complement $ A^d$ is defined by $A^d = \{x\in E:\ x \perp y$  for 
all $y \in A\}$.
An order closed ideal  of  $E$ is referred to as a band. A band $B$ in 
a vector lattice $E$ that satisfies $ E = B \oplus B^d$ is 
referred to as a projection band. Let $B$ be a projection band in 
a vector lattice $E$. Thus every vector $x\in E$ has a unique 
decomposition $x = x_1 + x_2$, where $x_1 \in B$ and $x_2 \in B^d$. Then it 
is easy to see that a projection $P_B : E \rightarrow E $ is defined via 
the formula $ P_B (x ) = x_1$. 
Clearly, $P_B $ is a positive projection. Any projection of the 
form $P_B$ is called a band projection.

Let $E$ be a vector lattice and $x\in E$.
A net $(x_{\alpha})_{\alpha \in A}\subset E$ is said to be:
\begin{itemize}
\item
\textbf{order convergent} to $x$ if there is a net
$(z_{\beta})_{\beta \in B} $ in $ E $ such that
$ z_{\beta} \downarrow 0 $ and for every $ \beta \in B$,
there exists $\alpha_{0} \in A$ such that
$ | x_{\alpha} - x |\leq z_{\beta}$ whenever $ \alpha \geq \alpha_{0}$.
We denote this convergence by $ x_{\alpha} \xrightarrow{o} x $ and write 
that $ (x_{\alpha})_{\alpha} $ is $o$-convergent to $x$.
\item
\textbf{unbounded order convergent} to $x$ if
$ | x_{\alpha} - x | \wedge u \xrightarrow{o} 0$ for all
$ u \in E^{+} $. We denote this convergence by
$ x_{\alpha} \xrightarrow{uo}x $ and write that
$ (x_{\alpha})_{\alpha} $ is $uo$-convergent to $x$.
It was first introduced by Nakano in \cite{14}
and was later used by DeMarr in \cite{4}. 
\item
\textbf{unbounded norm
convergent} to $x$ if $\| | x_{\alpha} - x | \wedge u\|\rightarrow 0$ for 
all $ u \in E^{+} $.  We denote this convergence by
$ x_{\alpha} \xrightarrow{un}x $ and write that
$ (x_{\alpha})_{\alpha} $ is $un$-convergent to $x$.
It was studied in \cite{5,12}.
\end{itemize}
It is clear that for order bounded 
nets, $uo$-convergence is equivalent to $o$-convergence.
By Corollary 3.6 of \cite{8}, every disjoint sequence in vector 
lattice $E$ is $uo$-null.
In \cite{15}, Wickstead characterized the spaces in 
which weak convergence of nets implies $uo$-convergence and vice 
versa and in \cite{7}, Gao characterized the spaces $E$ such 
that in its dual space $E^\prime$, $uo$-convergence implies 
weak$^{*}$-convergence and vice versa. 
\\
Let $X$ and $Y$ be two Banach spaces and let $E$ and $F$
be two Banach lattices. An operator,
\begin{itemize}

\item%
$ T: X \rightarrow Y $, is said to be
\textbf{Dunford-Pettis} (or that $ T $ has the Dunford-Pettis property) 
whenever $ x_{n}\xrightarrow{w} 0$ in $ X $ implies
$\| Tx_{n}\|\rightarrow 0$.
\item%
$ T : E \rightarrow X $,  is said to be
\textbf{$M$-weakly compact} if $T$ is continuous and $ \lim \|Tx_{n}\| = 0$ holds for
every norm bounded disjoint sequence $(x_{n})_n $ of $ E $. 
\item%
 $ T:E \rightarrow F $, is said to be \textbf{disjointness preserving} 
whenever $ x \perp y $ in $ E $ implies $ Tx \perp Ty$ in $F$.
\end{itemize}
If for an operator $T:E\rightarrow F$ between two vector 
lattices, $T\vee (-T)$ exists we say its modulus $|T|$ exists.
$e \in E^+ $ is a strong unit when the ideal $ I_{e}$ (generated by $e$) is 
equal to $ E $. Equivalently, for every $ x \geq 0 $ there exists
$ n \in \mathbb{N}$ such that  $ x \leq ne$. $e \in E^+ $ is also  a quasi-interior point if the closure of $I_e$ equal with $E$; or equivalently, $x\wedge ne\xrightarrow{\Vert.\Vert} x$ for every $x\in E^+$.
A non-zero element $ a\in E^+$ is an atom, if $ x \perp y  $ and 
$ x,y \in [0,a]$ imply either $ x = 0 $ or $ y = 0$.
$E$
is called an atomic Banach lattice if it is the
band generated by its atoms. 
A Banach lattice $E$ is said to be $KB$-space whenever every 
increasing norm bounded sequence of $E^+$ is norm convergent. Recall that $E^\sim$ is the vector space of all order bounded linear 
functionals on $E$ and $E_n^\sim $ is the vector space of all order 
continuous linear functional on $E$. A sublattice $Y$ of a vector lattice 
$E$ is said to be regular if for every subset $A$ of $Y$, 
infimum of $A$ is the same in $E$ and in $Y$, whenever infimum of $A$ 
exists in $Y$. It is easy to see that $L_p[0,1]$ is an order dense regular 
sublattice of $L_0[0,1]$, whenever $0<p<\infty $.  
A vector lattice is called laterally complete whenever every subset of 
pairwise disjoint positive vectors has a supremum.
For unexplained notation the reader is referred to \cite{1}.

\section{Unbounded $\sigma$-order-to-norm continuous operators}
Let $E$ be a vector lattice  and $F$ be a normed lattice.
An operator $T: E \rightarrow F $ is said to be unbounded
$\sigma$-order-to-norm continuous (or, $\sigma$-$uon$-continuous for short),
if $ x_{n} \xrightarrow{uo} 0$ in $ E $ implies
$ Tx_{n} \xrightarrow{\|.\|} 0 $ in $F$. The collection of all unbounded 
$\sigma$-order-to-norm continuous operators will be denoted by
$ L_{uon}^\sigma(E,F) $.

In the following example for $0\leq p\leq +\infty$, we show that every lattice homomorphism $T:L_p[0,1]\rightarrow \mathbb{R}$ is $\sigma$-$uon$-continuous operator.
\begin{example}
Let $ 0\leq p < \infty$ and let $T: L_p[0,1] \rightarrow \mathbb{R}$ be a 
lattice homomorphism, then $T$ is a $\sigma$-$uon$-continuous operator. 
First note that since $T$ is positive, $T$ is order bounded. Since 
$L_p[0,1]^\sim = L_p[0,1]^\sim_n $, $T$ is order continuous.
If $p=0$, by Exercise 23 of page 214 from \cite{1a},
$L_p[0,1]$ is Dedekind complete and laterally complete.
Therefore, by Theorem 3.2 of \cite{12a}, $T$ is $\sigma$-$uon$-continuous.
Now let $0< p < \infty$, since $T$ is
order continuous and lattice homomorphism, 
by Theorem 2.32 of \cite{1}, there exists an extension
$S$ of $T$ from $L_0[0,1]$ to $\mathbb{R}$,
which is an order continuous lattice homomorphism.
If $(x_n)_n \subset L_p[0,1]$ and $x_n \xrightarrow{uo}0 $ in $L_p[0,1]$, 
by Theorem 3.2 of \cite{8}, $x_n \xrightarrow{uo}0$ in $L_0[0,1]$ and 
therefore by Theorem 3.2 of \cite{12a}, $x_n \xrightarrow{o}0$ in
$L_0[0,1]$. Thus, $ Tx_n = Sx_n\xrightarrow{o}0$ in $\mathbb{R}$.
Hence $Tx_n \xrightarrow{\|.\|}0$ in $\mathbb{R}$.
\end{example}
Let $E$ be a vector lattice and $G$ and $F$ be two normed lattices.
Then, it is obvious that for each  $\sigma$-$uon$-continuous operator
$ T :E \rightarrow G $  and continuous operator $ S:G \rightarrow F $, 
$ ST :E \rightarrow F $ is a $\sigma$-$uon$-continuous operator.
The following proposition shows that with some conditions,
the combination of two $\sigma$-$uon$-continuous operators
is $\sigma$-$uon$-continuous.
\begin{proposition} 
Let $E$ be a vector lattice and $G$ and $F$ be two Banach lattices.
If $ E \xrightarrow{T} G \xrightarrow{S} F $ are $\sigma$-$uon$-continuous 
operators and the linear span of minimal ideals in $ G $ is order dense in 
$ G$, then $ ST $ is likewise a $\sigma$-$uon$-continuous operator.
\end{proposition}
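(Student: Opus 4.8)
The plan is to reduce the whole statement to a single bridging fact about the intermediate space $G$: in a Banach lattice, a norm-null sequence always possesses an unbounded-order-null subsequence. Once that is in hand, the two hypotheses chain together through the subsequence principle for sequences of real numbers. So I would fix a sequence $(x_n)_n \subseteq E$ with $x_n \xrightarrow{uo} 0$ and try to prove $\|STx_n\| \to 0$. Since $T \in L_{uon}^\sigma(E,G)$, the assumption $x_n \xrightarrow{uo} 0$ at once gives $\|Tx_n\| \to 0$ in $G$. The difficulty is that $S$ is only known to send $uo$-null sequences to norm-null ones, so I must upgrade the norm convergence of $(Tx_n)_n$ to unbounded order convergence before I may apply $S$.

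I would carry out this upgrade via the subsequence principle: to conclude $\|STx_n\| \to 0$ it suffices to show that every subsequence of $(STx_n)_n$ admits a further subsequence that is norm-null. Fix such a subsequence; along it one still has $\|Tx_n\| \to 0$, so I may pass to a further subsequence $(Tx_{n_k})_k$ with $\sum_k \|Tx_{n_k}\| < \infty$. The key step is that a summable sequence is order-null in $G$: because $G$ is norm complete, the series $\sum_k |Tx_{n_k}|$ converges in norm, and the tails $w_m := \sum_{k\ge m} |Tx_{n_k}|$ form a decreasing sequence of positive elements with $\|w_m\| \le \sum_{k\ge m}\|Tx_{n_k}\| \to 0$. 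A positive decreasing sequence whose norms tend to $0$ has infimum $0$ (any lower bound $v$ satisfies $v^+ \le w_m$, whence $\|v^+\| \le \|w_m\| \to 0$), so $w_m \downarrow 0$; and clearly $|Tx_{n_k}| \le w_m$ for all $k \ge m$. This is precisely the statement that $Tx_{n_k} \xrightarrow{o} 0$, hence $Tx_{n_k} \xrightarrow{uo} 0$, in $G$. Applying $S \in L_{uon}^\sigma(G,F)$ then yields $\|STx_{n_k}\| \to 0$, the required further subsequence, and the subsequence principle delivers $\|STx_n\| \to 0$, i.e. $ST \in L_{uon}^\sigma(E,F)$.

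The main obstacle is exactly the norm-to-order bridge in the middle paragraph; the surrounding steps are formal. The two facts it rests on, that the tails $w_m$ dominate the later terms and that a positive norm-null decreasing sequence decreases to $0$ in order, both follow from nothing more than closedness of the positive cone together with completeness of $G$, so I expect them to be routine. I would note that this argument uses only that $G$ is norm complete and does not invoke the order density of the span of the minimal ideals of $G$; that hypothesis appears to support a more structural route, in which one exploits the atomic structure of $G$ to relate $uo$-convergence to coordinatewise convergence, but the summable-subsequence argument above seems to me the most economical and self-contained path to the conclusion.
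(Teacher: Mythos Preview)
Your proof is correct and follows a genuinely different route from the paper's. The paper argues as follows: from $\|Tx_n\|\to 0$ one has $Tx_n\xrightarrow{w}0$, and then Wickstead's theorem (Theorem~1 of \cite{15}) is invoked --- this is precisely where the hypothesis that the linear span of minimal ideals is order dense in $G$ is used --- to conclude $Tx_n\xrightarrow{uo}0$ for the \emph{entire} sequence, after which $S$ applies directly.

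Your argument bypasses Wickstead's result entirely: you pass to an $\ell^1$-summable subsequence and dominate it by its own tail sums $w_m=\sum_{k\ge m}|Tx_{n_k}|$, which decrease to $0$ in order because the cone is closed and $\|w_m\|\to 0$. Combined with the subsequence principle for real sequences, this suffices. The upshot is that your proof is both more self-contained (no appeal to an external structure theorem) and strictly more general: it establishes the proposition for an \emph{arbitrary} Banach lattice $G$, so the hypothesis on minimal ideals is in fact superfluous. The paper's approach has the minor aesthetic advantage of producing $uo$-convergence of the full sequence $(Tx_n)_n$ without a subsequence detour, but only at the cost of that extra structural assumption.
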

\begin{proof}
Let $ (x_{n})_n \subseteq E $ and $ x_{n} \xrightarrow{uo} 0 $ in $ E$. 
Then by the assumption we have $ Tx_{n}\xrightarrow{\|.\|}0 $ in $ G$, and 
so $ Tx_{n} \xrightarrow{w} 0 $ in $ G$. Now by Theorem 1 of \cite{15}, we 
have $ Tx_{n} \xrightarrow{uo} 0 $ in $ G$. Therefore,
$ ST(x_{n}) \xrightarrow{\|.\|} 0$ in $ F$. 
\end{proof}
Recall that an operator $T$ from vector lattice $E$ into normed vector 
lattice $F$ is said to be $\sigma$-order-to-norm continuous operator 
whenever $x_n \xrightarrow{o} 0$ implies $Tx_n \xrightarrow{\|.\|} 0$ for 
all sequence $(x_n)_n \subseteq E$. This classification of operators has been introduce and studied by K. Haghnejad Azar, see \cite{10}.
Obviously, every $\sigma$-$uon$-continuous operator is a
$\sigma$-order-to-norm continuous operator.
However, the converse is not true in general,
as shown in the following example.
\begin{example}
The operator $T:\ell^{1} \rightarrow \ell^{\infty} $ defined by 
\[
T(x_{1},x_{2},\ldots) = (\sum_{i=1}^\infty x_i , \sum_{i=1}^\infty x_{i},\ldots)
\]
is a $\sigma$-order-to-norm continuous operator
($\ell^{1}$ has order continuous norm and $ T $ is a continuous operator). 
Now  if $(e_{n})_n $ is the standard basis of $ \ell^{1} $, then
$ e_{n}\xrightarrow{uo} 0 $ in $\ell^{1}$ and $T(e_{n})=(1,1,1,\ldots)$. 
Therefore, $ \|T(e_{n})\| \nrightarrow 0 $ in $\ell^{\infty}$. Thus $ T $ 
is not $\sigma$-$uon$-continuous.
\end{example}
If a vector lattice $E$ is Dedekind $\sigma$-complete and laterally
$\sigma$-complete, then every $\sigma$-order-to-norm continuous operator 
from $E$ into $F$ is $\sigma$-$uon$-continuous.
Namely, if $(x_{n})_n$ is a sequence in $E$ such that
$ x_{n} \xrightarrow{uo} 0 $, then by Theorem 3.2 of \cite{12a},
$(x_n)_n$ is order bounded and therefore $ x_n \xrightarrow{o} 0 $ in
$ E $ and so  $ T(x_n) \xrightarrow{\|.\|} 0 $ in $ F $.
Hence $ T $ is a $\sigma$-$uon$-continuous operator.

Let $T:E\rightarrow F $ be a positive operator between two vector lattices. 
We say that an operator $ S:E\rightarrow F$ is dominated by
$T$ (or that $T$ dominates $S$) whenever $|Sx|\leq T|x|$ holds
for each $x\in E$.
Let  $T$ be a $\sigma$-$uon$-continuous operator from vector lattice $E$ 
into normed lattice $F$. It is obvious that  $S$ 
is $\sigma$-$uon$-continuous whenever  $S$ is dominated by $T$.
\begin{theorem}
Let $E$ be a vector lattice and $F$ be a normed lattice.
If $ T:E \rightarrow F$ is an order bounded
$\sigma$-$uon$-continuous operator, then by one of
the following conditions, $ |T| $ exists and
belongs to $ L_{uon}^\sigma(E,F)$.
\begin{enumerate}[label=$(\arabic*)$]
	\item $E$ is Dedekind $\sigma$-complete and laterally
	$\sigma$-complete and $F$ is atomic with order continuous norm.
	\item $ T $ preserves disjointness.
\end{enumerate}
\end{theorem}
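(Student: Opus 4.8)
The plan is to establish both conclusions separately under each of the two hypotheses, since each is asserted to suffice on its own, and the arguments are of quite different character.

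\emph{Condition (1).} First I would observe that $F$, being a Banach lattice with order continuous norm, is Dedekind complete, so the Riesz--Kantorovich theorem (see \cite{1}) yields the modulus $|T|$, given for $u\in E^+$ by $|T|(u)=\sup\{Tv-Tw:\ v,w\in E^+,\ v+w=u\}$. It then remains to prove $|T|\in L_{uon}^\sigma(E,F)$. Here I would use two preliminary reductions. Since $o$-convergence implies $uo$-convergence, $T$ is in particular $\sigma$-order-to-norm continuous; and since $E$ is Dedekind $\sigma$-complete and laterally $\sigma$-complete, Theorem 3.2 of \cite{12a} shows every $uo$-null sequence is order bounded, hence $o$-null. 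So it suffices to prove $|T|$ is $\sigma$-order-to-norm continuous. Given $x_n\xrightarrow{o}0$, after replacing $x_n$ by $|x_n|$ (legitimate because $\lvert|T|x_n\rvert\le|T|(|x_n|)$ as $|T|\ge0$) I may assume $x_n\ge0$. Setting $u_n=\sup_{k\ge n}x_k$, which exists by Dedekind $\sigma$-completeness, gives $x_n\le u_n\downarrow0$, whence $\||T|x_n\|\le\||T|u_n\|$; by order continuity of the norm of $F$ it is enough to show $|T|u_n\downarrow0$.

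The heart of the matter is therefore $\inf_n|T|u_n=0$, and this is exactly where atomicity of $F$ enters. For each atom $e$ let $\phi_e$ be its coordinate functional (the band projection onto the atom followed by the lattice isomorphism of that band onto $\mathbb{R}$); it is positive and, because $F$ has order continuous norm, order continuous. Using the decomposition form of the Riesz--Kantorovich formula together with the fact that the set $\{Tv-Tw:\ v+w=u,\ v,w\in E^+\}$ is upward directed, an order continuous functional passes through the supremum, yielding the identity $\phi_e\circ|T|=|\phi_e\circ T|$ in $E^\sim$. Now $\phi_e\circ T$ is $\sigma$-order continuous: if $y_n\downarrow0$ then $y_n\xrightarrow{o}0$, so $Ty_n\xrightarrow{\|.\|}0$ and hence $\phi_e(Ty_n)\to0$. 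Because the $\sigma$-order continuous functionals form a band in $E^\sim$, the modulus $|\phi_e\circ T|$ is again $\sigma$-order continuous, so $\phi_e(|T|u_n)=|\phi_e\circ T|(u_n)\to0$. As the band projection onto $e$ is order continuous, the component of $\inf_n|T|u_n$ along every atom vanishes, and since $F$ is atomic this forces $\inf_n|T|u_n=0$, completing condition (1).

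\emph{Condition (2).} Here I would invoke Meyer's theorem on disjointness preserving operators: an order bounded, disjointness preserving operator $T$ between Archimedean vector lattices has a modulus $|T|$, which is a lattice homomorphism and satisfies $|Tx|=|T|(|x|)$ for all $x\in E$, with no Dedekind completeness of $F$ required. Membership in $L_{uon}^\sigma(E,F)$ is then immediate: if $x_n\xrightarrow{uo}0$ then $|x_n|\xrightarrow{uo}0$, so $Tx_n\xrightarrow{\|.\|}0$ by hypothesis, and using $\lvert|T|x_n\rvert\le|T|(|x_n|)=|Tx_n|$ together with $\|y\|=\||y|\|$ one gets $\||T|x_n\|\le\||Tx_n|\|=\|Tx_n\|\to0$.

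I expect the main obstacle to be the last part of condition (1), namely proving $|T|u_n\downarrow0$. The identity $\phi_e\circ|T|=|\phi_e\circ T|$ (which depends on order continuity of $\phi_e$ and on the directedness of the Riesz--Kantorovich sets) combined with the band property of the $\sigma$-order continuous dual is precisely what converts the $\sigma$-order-to-norm continuity of $T$ into the $\sigma$-order continuity of each atomic component of $|T|$; verifying these ingredients carefully is the delicate step. By contrast, condition (2) is short once Meyer's modulus formula $|Tx|=|T|(|x|)$ is available.
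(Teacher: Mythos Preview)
Your treatment of condition~(2) matches the paper's proof: both invoke Meyer's theorem (Theorem~2.40 of \cite{1}) to get $|T|$ and the identity $|T|(|x|)=|Tx|$, and the remaining estimate is identical.

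For condition~(1) your route diverges from the paper's and contains a technical error. The paper does \emph{not} analyze $|T|$ via atomic coordinate functionals. Instead it first upgrades $T$ itself from $\sigma$-order-to-norm continuous to $\sigma$-\emph{order} continuous: if $x_n\xrightarrow{o}0$ then $(x_n)$ is order bounded, so $(Tx_n)$ is order bounded and $\|Tx_n\|\to 0$; now Lemma~5.1 of \cite{5} (in an atomic Banach lattice with order continuous norm, order bounded plus norm null implies order null) gives $Tx_n\xrightarrow{o}0$. With $T$ $\sigma$-order continuous and $F$ Dedekind complete, Theorem~1.56 of \cite{1} immediately yields that $|T|$ exists and is $\sigma$-order continuous (the $\sigma$-order continuous operators form a band in $L_b(E,F)$). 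The passage from $uo$-null to $o$-null via Kaplan and then to norm-null via order continuity of the norm of $F$ finishes the argument. This is considerably shorter than your atom-by-atom reconstruction, which in effect re-proves a special case of Theorem~1.56.

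The error in your argument is the claim that $\{Tv-Tw:\ v,w\in E^+,\ v+w=u\}$ is upward directed. It is not: already for $T=\mathrm{Id}$ on $\mathbb{R}^2$ and $u=(1,1)$ this set is $[-1,1]^2$, which contains $(1,-1)$ and $(-1,1)$ with no common upper bound in the set. The identity $\phi_e\circ|T|=|\phi_e\circ T|$ you want is nevertheless true, but its correct proof uses the \emph{partition} form of the Riesz--Kantorovich formula, $|T|u=\sup\{\sum_i|Tx_i|:\ x_i\ge 0,\ \sum_i x_i=u\}$, whose index set \emph{is} upward directed (refine partitions), together with the fact that $\phi_e$ is an order continuous lattice homomorphism so that $\phi_e(|Tx_i|)=|\phi_e(Tx_i)|$. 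With this correction your approach goes through, but the paper's use of Lemma~5.1 of \cite{5} and Theorem~1.56 of \cite{1} is the cleaner path.
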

\begin{proof}
\begin{enumerate}[label=$(\arabic*)$]
\item
 First we show that $ T $ is a $\sigma$-order continuous operator.
Let  $(x_{n})_{n} \subseteq E$  and  $x_{n} \xrightarrow{o} 0$ in $ E$. 
Then $ x_{n} \xrightarrow{uo} 0$ and by Theorem 3.2 of \cite{12a},
$(x_{n})_{n}$  is order bounded in $ E$. Hence by the assumption,
$(T(x_{n}))_{n} $ is order bounded and  $T(x_{n}) \xrightarrow{\|.\|}0$
in $F$. Now by Lemma 5.1 of \cite{5},  $ T(x_{n}) \xrightarrow{o} 0 $
in $ F$. Hence $ T $ is a $\sigma$-order continuous operator.
Note that by Theorem 4.10  of \cite{1}, $ F $ is Dedekind complete, 
therefore by using Theorem 1.56 of \cite{1}, $ |T| $ is a
$\sigma$-order continuous operator. Now, assume that
$(x_{n})_n\subseteq E$ and $ x_{n} \xrightarrow{uo} 0$ in $E$,
since $E$ is Dedekind $\sigma$-complete and laterally $\sigma$-complete,
$(x_{n})_n$ is order bounded. It follows that $ x_{n} \xrightarrow{o} 0$
in $ E $ and $ |T|(x_n) \xrightarrow{o} 0 $ in $ F $. Since $ F $ has
order continuous norm, we have $ |T|(x_{n}) \xrightarrow{\|.\|} 0$.
\item It is obvious that $ F $ is Archimedean. By Theorem 2.40 of \cite{1}, 
$ |T| $ exists and for all $x$, we have $|T| (|x|) =  |T(|x|)| = |T(x)|$.	
If $(x_{n})_{n} \subseteq E$ and $x_{n} \xrightarrow{uo} 0$,
then for each $n$,
\(
|T|(|x_{n}|)=|T(|x_{n}|)|=|T(x_{n})| \xrightarrow{\|.\|} 0
\)
in $ F$. Now by inequality $ | | T | (x_{n}) | \leq |T | | x_{n} |$,
we have  $ | T| (x_{n}) \xrightarrow{\|.\|} 0$.
\end{enumerate}	
\end{proof}
Recall that a vector lattice $E$ is said to be perfect whenever
the natural embedding $x \mapsto \hat{x}$ from $E$ to
$(E_n ^\sim)_n ^\sim$  is one-to-one and onto.
By Exercise 3 of page 74 of \cite{1}, if $F$ is a perfect vector lattice, 
then $L_b (E,F)$ is likewise a perfect vector lattice for
each vector lattice $E$.	
\begin{corollary}
If the condition $(1)$ from above theorem holds, then 
\begin{enumerate}[label=$(\arabic*)$]
\item  an order bounded operator $ T : E \rightarrow F $ is a
$\sigma$-order continuous operator if and only if it is
$\sigma$-$uon$-continuous. It follows that
$ L_{uon}^\sigma (E,F) \cap L_b(E,F)  $ is a band of $L_b(E,F)$. 
\item  if $F$ is a perfect vector lattice, then 
$ L_{uon}^\sigma (E,F) \cap L_b(E,F)  $ is likewise a
perfect vector lattice for each vector lattice $E$.
\end{enumerate}
\end{corollary}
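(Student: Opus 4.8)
The plan is to establish the biconditional in $(1)$ first, then read off the band structure, and finally bootstrap $(2)$ from $(1)$ together with the perfectness of $L_b(E,F)$. One implication of the equivalence is already contained in the first half of the preceding theorem's proof: if $T$ is order bounded and $\sigma$-$uon$-continuous and $x_n \xrightarrow{o} 0$ in $E$, then $(x_n)_n$ is order bounded and $x_n \xrightarrow{uo} 0$, so $Tx_n \xrightarrow{\|.\|} 0$, while order boundedness of $T$ makes $(Tx_n)_n$ order bounded; Lemma 5.1 of \cite{5} then upgrades norm-nullity to $Tx_n \xrightarrow{o} 0$, so $T$ is $\sigma$-order continuous. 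For the reverse implication I would start from $x_n \xrightarrow{uo} 0$; condition $(1)$ on $E$ forces $(x_n)_n$ to be order bounded by Theorem 3.2 of \cite{12a}, hence $x_n \xrightarrow{o} 0$, so $\sigma$-order continuity of $T$ gives $Tx_n \xrightarrow{o} 0$ and the order continuity of the norm of $F$ gives $Tx_n \xrightarrow{\|.\|} 0$. This closes the equivalence.

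With the equivalence in hand I would identify $L_{uon}^\sigma(E,F) \cap L_b(E,F)$ with the space $L_c(E,F)$ of order bounded $\sigma$-order continuous operators. Because $F$ has order continuous norm it is Dedekind complete (Theorem 4.10 of \cite{1}), and the classical fact that $L_c(E,F)$ is a band of $L_b(E,F)$ whenever $F$ is Dedekind complete (Theorem 1.57 of \cite{1}) then gives exactly that $L_{uon}^\sigma(E,F) \cap L_b(E,F)$ is a band of $L_b(E,F)$, settling $(1)$.

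For $(2)$ I would combine $(1)$ with the remark preceding the corollary, which says that $L_b(E,F)$ is perfect when $F$ is perfect. A perfect vector lattice is Dedekind complete, so every band in it is a projection band; applied to $M := L_{uon}^\sigma(E,F) \cap L_b(E,F)$ this yields a band decomposition $L_b(E,F) = M \oplus M^d$. It then suffices to show that perfectness passes to band summands. The step I expect to demand the most care is verifying that the canonical embedding of $L_b(E,F)$ into its second order continuous dual respects this decomposition: one has $(L_b(E,F))_n^\sim = M_n^\sim \oplus (M^d)_n^\sim$ under the identification of $M_n^\sim$ with the order continuous functionals vanishing on $M^d$, the biduals split accordingly, and the canonical map of $L_b(E,F)$ is the direct sum of the canonical maps of $M$ and $M^d$. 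Since the former is onto, so is its restriction to the summand $M$, whence $M$ is perfect. Apart from this bookkeeping with the bidual decomposition, the argument is a straightforward assembly of the lemmas already used for the theorem together with the standard band theorem for order continuous operators.
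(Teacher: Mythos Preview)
Your argument is correct. Part~(1) matches the paper: you spell out both directions of the equivalence explicitly where the paper simply writes ``Follows from Theorem~1.57 of \cite{1}'', but the reasoning is the same, and the identification of $L_{uon}^\sigma(E,F)\cap L_b(E,F)$ with $L_c(E,F)$ together with Theorem~1.57 is exactly what the paper intends.

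For part~(2) you take a genuinely different route. Both you and the paper reduce to the general lemma that a band $B$ in a perfect vector lattice $G$ is again perfect, but the verifications diverge. The paper appeals directly to Nakano's characterization (Theorem~1.71 of \cite{1}): it checks that $B_n^\sim$ separates the points of $B$ by restricting functionals from $G_n^\sim$, and that any increasing net in $B^+$ bounded under every $0\le f\in B_n^\sim$ has a supremum, by lifting the net to $G$, invoking perfectness of $G$ there, and using that bands are order closed so the supremum stays in $B$. Your argument instead uses that perfect lattices are Dedekind complete to make $B$ a projection band, and then shows that the canonical embedding $G\to (G_n^\sim)_n^\sim$ splits along $G=B\oplus B^d$. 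The paper's path is shorter and entirely elementary; your path is more structural but requires exactly the bookkeeping you anticipate (that the annihilator of $B^d$ in $G_n^\sim$ is a band identifiable with $B_n^\sim$, and that the splitting iterates to the second dual), which Nakano's criterion sidesteps.
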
 
\begin{proof}
\begin{enumerate}[label=$(\arabic*)$]
\item	 Follows from Theorem 1.57 of \cite{1}.
\item	  We will show that if $ E$ is a perfect vector lattice and $ B$ is 
a band of $E$, then $B$ is perfect vector lattice in its own right. Suppose 
$x,y \in B$ and $x\neq y$, by Theorem 1.71 of \cite{1}, there exists
$f\in E_n ^\sim $ such that $f(x)\neq f(y)$. The restrection of $f$ to
$B$ is order continuous and $f\mid_B (x) \neq f\mid_B (y)$. Therefore
$B_n ^\sim $ separates the points of $B$.
Let $(x_\alpha)_\alpha \subseteq B$, $0\leq x_\alpha \uparrow $ and
$\sup\{f(x_\alpha)\}<\infty$ for each $0 \leq f \in B_n ^\sim$. It is clear 
that $(x_\alpha)_\alpha \subseteq E$ and $0\leq x_\alpha \uparrow  $ in
$ E$. On the other hand, for each $f\in E_n ^\sim$,
$f\mid_B \in B_n ^\sim$ and $f\mid_B (x_\alpha) = f(x_\alpha) $ for all
$\alpha$, therefore $\sup\{|f(x_\alpha)|\}< \infty$ for each
$0\leq f \in E_n ^\sim$. Thus by Theorem 1.71 of \cite{1}, there exists 
some $x \in E$ satisfying $0\leq x_\alpha \uparrow x$. Since $B$ is a band 
of $E$, hence $x\in B$. Therefore by Theorem 1.71 of \cite{1},
$B$ is a perfect vector lattice. Similarly
$ L_{uon}^\sigma (E,F) \cap L_b(E,F)  $ is a perfect vector lattice.
\end{enumerate}	
\end{proof}
Recall that a Banach space $E$ is said to be $AL$-space, if
$ \|x+y\| = \|x\| + \|y\| $ holds for all $ x , y \in E^+$ with
$ x\wedge y =0$.
\begin{theorem}
Let $ E $ be an $AL$-space, $F$ be a normed lattice and let 
$ T:E\rightarrow F$ be a positive operator.
Then for the following assertions:
\begin{enumerate}[label=$(\arabic*)$]
	\item  $T$ is a $\sigma$-$uon$-continuous operator.
	\item  T is a Dunford-Pettis operator.
	\item  for every relatively weakly compact net
	$(x_{\alpha})_{\alpha} \subseteq E $,
	      $ x_{\alpha} \xrightarrow{w}0$ implies
                  	$ Tx_{\alpha} \xrightarrow{\|.\|}0$.
	\item  For every relatively weakly compact net
	$(x_{\alpha})_{\alpha} \subseteq E $,
	      $x_{\alpha} \xrightarrow{uo}0$ implies
	                $Tx_{\alpha} \xrightarrow{\|.\|}0$.
\end{enumerate}
We have
	\begin{equation*}
	(1)\Rightarrow (2) \Rightarrow (3)\Rightarrow (4).
	\end{equation*}
\end{theorem}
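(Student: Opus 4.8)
The plan is to run the chain of implications by leaning on three structural features of an $AL$-space $E$: it has order continuous norm, so that $E'=E_n^\sim$; it possesses the Dunford--Pettis property (by Kakutani's representation every $AL$-space is an $L_1(\mu)$); and its relatively weakly compact subsets are almost order bounded. Throughout I use that $T$, being positive out of a Banach lattice, is automatically norm continuous.

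For $(1)\Rightarrow(2)$ I would argue as follows. Every disjoint sequence is $uo$-null by Corollary 3.6 of \cite{8}, so hypothesis $(1)$ yields $\|Tx_n\|\to 0$ for every norm bounded disjoint sequence $(x_n)$; that is, $T$ is $M$-weakly compact. Since $M$-weakly compact operators are weakly compact, and since the $AL$-space $E$ has the Dunford--Pettis property, every weakly compact operator on $E$---in particular $T$---is Dunford--Pettis.

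For $(2)\Rightarrow(3)$ the real issue is to promote a statement about sequences to one about nets, and this is where I expect the main obstacle. Arguing by contradiction, suppose $(x_\alpha)_\alpha$ is relatively weakly compact with $x_\alpha\xrightarrow{w}0$ yet $\|Tx_\alpha\|\nrightarrow 0$. Passing to a subnet I may assume $\|Tx_{\alpha_\beta}\|\ge\varepsilon$ for some fixed $\varepsilon>0$; put $A=\{x_{\alpha_\beta}\}$, so that $0$ lies in the weak closure of $A$, a subset of a weakly compact set. Weakly compact subsets of a Banach space are angelic (Eberlein--\v{S}mulian), hence there is an honest \emph{sequence} $(a_n)_n\subseteq A$ with $a_n\xrightarrow{w}0$. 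Now $(2)$ gives $\|Ta_n\|\to 0$, contradicting $\|Ta_n\|\ge\varepsilon$. The passage from the net to this sequence via angelicity is the crux of the whole theorem.

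For $(3)\Rightarrow(4)$ it is enough to verify that a relatively weakly compact, $uo$-null net $(x_\alpha)_\alpha$ in $E$ is automatically weakly null, since $(3)$ then applies verbatim. Fix $f\in E'=E_n^\sim$; splitting into positive and negative parts, I may assume $f\ge 0$. Given $\varepsilon>0$, almost order boundedness of the range of the net furnishes $u\in E^+$ with $\|(|x_\alpha|-u)^+\|\le\varepsilon$ for every $\alpha$, whence $|f(x_\alpha)|\le f(|x_\alpha|\wedge u)+\|f\|\,\varepsilon$. As $x_\alpha\xrightarrow{uo}0$ we have $|x_\alpha|\wedge u\xrightarrow{o}0$, and the order continuity of $f$ kills the first summand; letting $\varepsilon\downarrow 0$ shows $f(x_\alpha)\to 0$. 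Since $f\in E'$ was arbitrary, $x_\alpha\xrightarrow{w}0$, and $(3)$ completes the argument.
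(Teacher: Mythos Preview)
Your argument is correct and, for $(1)\Rightarrow(2)$ and $(2)\Rightarrow(3)$, matches the paper's proof essentially line for line: the paper likewise passes through $M$-weak compactness and then the Dunford--Pettis property of $AL$-spaces, and for the net-to-sequence reduction it cites the angelicity result as Theorem~4.50 of \cite{6}.

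For $(3)\Rightarrow(4)$ there is a small but genuine difference worth recording. The paper quotes Proposition~3.9 of \cite{9} as a black box to obtain $|x_\alpha|\xrightarrow{w}0$, applies $(3)$ to the net $(|x_\alpha|)$, and finishes via positivity using $|Tx_\alpha|\le T|x_\alpha|$. You instead prove $x_\alpha\xrightarrow{w}0$ directly from almost order boundedness of relatively weakly compact sets in an $AL$-space together with $E'=E_n^\sim$, and then apply $(3)$ to $(x_\alpha)$ itself. Your route is more self-contained and sidesteps the tacit verification that $(|x_\alpha|)$ is again relatively weakly compact, at the cost of invoking a property specific to $KB$-spaces; the result the paper cites holds in any order-continuous Banach lattice.
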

\begin{proof}
\begin{enumerate}[label=($\arabic*$)]
\item[] $(1)\Rightarrow(2)$	Let $(x_{n})_{n} \subseteq E $ be a norm bounded 
and disjoint sequence. Since $x_{n} \xrightarrow{uo} 0 $ in $ E $, 
therefore $Tx_{n} \xrightarrow{\|.\|}0 $ in $F$. Hence $ T $ is a
$M$-weakly compact operator. Now by Theorem 5.61 of \cite{1},
$ T $ is weakly compact and therefore by Theorems 5.85 and 5.82 of 
\cite{1}, $ T $ is a Dunford-Pettis operator.
\item[] $(2)\Rightarrow (3)$ Suppose $(x_{\alpha})_{\alpha}$ is relatively 
weakly compact and $x_{\alpha} \xrightarrow{w}0$. Suppose also
$(Tx_{\alpha})_\alpha$ does not converge to 0 in norm. Then there exists
$\epsilon > 0$ such that for any $\alpha$, there exists
$\beta(\alpha) \geq \alpha $ satisfying
$\|Tx_{\beta(\alpha)} \| \geq \epsilon$. Thus by passing to the subnet
$(Tx_{\beta(\alpha)})$, we may assume $\inf{\|Tx_{\alpha}\|}> 0$. Since
$0 \in \overline{(x_{\alpha})}^{w} $, By Theorem 4.50 of \cite{6} there exists a sequence $(y_{n})_n \subseteq \{x_{\alpha} : \alpha \}$
such that $y_{n} \xrightarrow{w} 0$. The assumption $(2)$ now implies
$ Ty_{n} \xrightarrow{\|.\|} 0$, which is a contradiction.
\item[] $(3)\Rightarrow (4)$ Suppose $(x_{\alpha})_{\alpha}$ is relatively 
weakly compact and $ x_{\alpha} \xrightarrow{uo} 0 $.
By Proposition 3.9 of \cite{9}, $| x_{\alpha}| \xrightarrow{w} 0$.
Thus by assumption (3), $ T | x_{\alpha} | \xrightarrow{\|.\|}0$.
Now by inequality $ |Tx_{\alpha}| \leq T |x_{\alpha}|$ we have
$Tx_{\alpha} \xrightarrow{\|.\|}0$.
\end{enumerate}	
	\end{proof}
\section{Unbounded norm continuous operators}
An operator $ T $ between two Banach lattices $ E $ and $ F $ is said
to be unbounded norm continuous (or, $un$-continuous for short) whenever
$ x_\alpha\xrightarrow{un} 0$ implies $Tx_{\alpha}\xrightarrow{un}0$,
for $(x_{\alpha})_{\alpha}\subseteq E$. For sequence
$(x_{n})_{n}\subseteq E$, if $x_{n}\xrightarrow{un}0$ implies
$Tx_{n}\xrightarrow{un}0$, then  $T$ is called a $\sigma$-unbounded norm 
continuous operator (or, $\sigma$-$un$-continuous for short).
The collection of all unbounded norm continuous operators of $ L(E,F) $ 
 will be
denoted by $L_{un}(E,F)$. That is,
\begin{equation*}
L_{un}(E,F)=\{T\in L(E,F):~T~\text{is unbounded norm continuous }\}.
\end{equation*} 
Similarly, $ L_{un}^\sigma(E,F) $  will denote the collection of all 
operators from $ E $ to $ F $ that are $\sigma$-unbounded norm continuous. 
That is,
\begin{equation*}
	L_{un}^\sigma(E,F)=\{T\in L(E,F):~T~\text{is $\sigma$-unbounded norm continuous }\}.
\end{equation*}
Now in the following proposition, by using
Theorem 5.3 from \cite{5} and Theorem 2.3 from \cite{12}, we show the relationship  between the classifications of  $\sigma$-unbounded norm continuous and unbounded $\sigma$-order-to-norm continuous operators. 

\begin{proposition}
Let $E$ be a Banach lattice with order continuous norm and $F$ be a Banach lattice. Then we have the following assertions:
\begin{enumerate}[label=$(\arabic*)$]
     \item if $E$ is atomic, then $L_{uon}^\sigma (E,F) \subseteq  L_{un}^\sigma(E,F)$.
     \item if $F$ has a strong unit, then $L_{un}^\sigma(E,F)\subseteq  L_{uon}^\sigma(E,F)$.
\end{enumerate}
\end{proposition}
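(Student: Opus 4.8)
The plan is to route both inclusions through three facts: that norm convergence always implies $un$-convergence, that in a Banach lattice with order continuous norm $uo$-convergence implies $un$-convergence, and the two cited theorems, which supply the reverse implications under the structural hypotheses on $E$ and $F$. The whole argument is a chain of convergence implications, so the work is in tracking which hypothesis powers which link.

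For $(1)$, let $T\in L_{uon}^\sigma(E,F)$ and take a sequence $(x_n)_n\subseteq E$ with $x_n\xrightarrow{un}0$. Since $E$ is atomic with order continuous norm, Theorem 5.3 of \cite{5} identifies $un$- and $uo$-convergence for sequences in $E$, so $x_n\xrightarrow{uo}0$. The $\sigma$-$uon$-continuity of $T$ then gives $\|Tx_n\|\to 0$, and because $0\le |Tx_n|\wedge u\le |Tx_n|$ yields $\| |Tx_n|\wedge u\|\le \|Tx_n\|$ for every $u\in F^+$, norm convergence forces $Tx_n\xrightarrow{un}0$. Hence $T\in L_{un}^\sigma(E,F)$.

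For $(2)$, let $T\in L_{un}^\sigma(E,F)$ and take $(x_n)_n\subseteq E$ with $x_n\xrightarrow{uo}0$. Here I use only that $E$ has order continuous norm: for each $u\in E^+$ we have $|x_n|\wedge u\xrightarrow{o}0$, and order continuity turns an $o$-null sequence into a norm-null one, i.e. $\| |x_n|\wedge u\|\to 0$, so $x_n\xrightarrow{un}0$. The $\sigma$-$un$-continuity of $T$ gives $Tx_n\xrightarrow{un}0$ in $F$. Finally, since $F$ has a strong unit, Theorem 2.3 of \cite{12} tells us $un$-convergence and norm convergence coincide in $F$, whence $\|Tx_n\|\to 0$ and $T\in L_{uon}^\sigma(E,F)$.

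The routine inequalities and the $o$-to-norm passage are immediate; the genuine content sits entirely in the two external theorems, which provide exactly the missing reverse implications ($un\Rightarrow uo$ for atomic order continuous spaces in \cite{5}, and $un\Leftrightarrow\|\cdot\|$ for strong-unit spaces in \cite{12}). The only real obstacle is the bookkeeping of where each hypothesis acts: atomicity of $E$ is needed solely to obtain $un\Rightarrow uo$ in the domain for $(1)$, while the strong unit of $F$ is needed solely to obtain $un\Rightarrow\|\cdot\|$ in the codomain for $(2)$; the order continuity of the norm on $E$ is what makes the trivial direction $uo\Rightarrow un$ available, and one should note that each of these implications can genuinely fail without its hypothesis (e.g. a sliding-bump sequence in $L_1[0,1]$ is $un$-null but not $uo$-null, showing atomicity cannot be dropped in $(1)$).
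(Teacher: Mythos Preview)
Your proof is correct and follows exactly the approach the paper intends: the paper itself offers no detailed argument, simply pointing to Theorem 5.3 of \cite{5} and Theorem 2.3 of \cite{12}, and you have supplied precisely the chain of implications those citations are meant to justify. The only difference is that you have written out the elementary links (norm $\Rightarrow un$, and $uo\Rightarrow un$ under order continuous norm) explicitly, which the paper leaves implicit.
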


By using Theorem 4.3 of \cite{12}, we also have the following proposition.
\begin{proposition}
Let $E$ and $F$ be two Banach lattices and	Let $ G $ be a sublattice of
$ E $ and $ T \in L_{un}(E,F)$. Each of the following conditions implies 
that $ T \in L_{un}(G,F)$.
\begin{enumerate}[label=$(\arabic*)$]
	\item $ G $ is majorizing in $ E $;
	\item $ G $ is norm dense in $ E $;
	\item $ G $ is a projection band in $ E $.
\end{enumerate}	
\end{proposition}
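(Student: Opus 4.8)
The plan is to reduce the statement to a single fact about how $un$-convergence behaves under passage to a sublattice, and then to invoke the hypothesis $T\in L_{un}(E,F)$. First I would fix an arbitrary net $(x_{\alpha})_{\alpha}\subseteq G$ with $x_{\alpha}\xrightarrow{un}0$ in $G$, the goal being to prove $Tx_{\alpha}\xrightarrow{un}0$ in $F$. The crucial observation is that $un$-convergence is defined relative to the positive cone of the ambient space: in $G$ one only tests $\||x_{\alpha}|\wedge u\|\to0$ for $u\in G^{+}$, whereas in $E$ one tests it for all $u\in E^{+}$. Since $G^{+}\subseteq E^{+}$, $un$-nullity in $E$ is formally the stronger requirement, so the entire content of the proof lies in promoting $un$-nullity in $G$ to $un$-nullity in $E$.

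This promotion is precisely Theorem 4.3 of \cite{12}, applied under each of the three hypotheses, and for completeness I would indicate why each condition forces $\||x_{\alpha}|\wedge u\|\to0$ for an arbitrary $u\in E^{+}$. If $G$ is majorizing, I pick $g\in G^{+}$ with $u\leq g$; monotonicity of the meet gives $\||x_{\alpha}|\wedge u\|\leq\||x_{\alpha}|\wedge g\|\to0$. If $G$ is norm dense, I approximate $u$ by some $g\in G^{+}$ (using $\|g^{+}-u\|\leq\|g-u\|$ to remain in the positive cone) and control the discrepancy through the lattice inequality $\bigl||x_{\alpha}|\wedge u-|x_{\alpha}|\wedge g\bigr|\leq|u-g|$, which splits $\||x_{\alpha}|\wedge u\|$ into a term tending to $0$ and a term bounded by $\|u-g\|$. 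If $G$ is a projection band, I decompose $u=u_{1}+u_{2}$ with $u_{1}\in G$ and $u_{2}\in G^{d}$; since $|x_{\alpha}|\in G$ is disjoint from $u_{2}$, the inequality $|x_{\alpha}|\wedge(u_{1}+u_{2})\leq|x_{\alpha}|\wedge u_{1}+|x_{\alpha}|\wedge u_{2}$ collapses to $|x_{\alpha}|\wedge u=|x_{\alpha}|\wedge u_{1}$ with $u_{1}\in G^{+}$, so again $\||x_{\alpha}|\wedge u\|\to0$.

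Having established $x_{\alpha}\xrightarrow{un}0$ in $E$ in each case, the hypothesis $T\in L_{un}(E,F)$ applies directly to the net $(x_{\alpha})_{\alpha}\subseteq G\subseteq E$ and yields $Tx_{\alpha}\xrightarrow{un}0$ in $F$; since the net was arbitrary, this gives $T\in L_{un}(G,F)$. Conceptually there is no deep obstacle here, as the real work is packaged inside Theorem 4.3 of \cite{12}. The only point requiring genuine care is the norm-dense case, where one cannot dominate $u$ by a single element of $G$ and must instead run an $\varepsilon/2$ approximation argument; by contrast, the majorizing and projection-band cases follow essentially at once from the lattice identities above.
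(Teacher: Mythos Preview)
Your proposal is correct and follows exactly the route the paper takes: the paper simply states that the proposition is obtained ``by using Theorem 4.3 of \cite{12}'', i.e., lifting $un$-nullity from $G$ to $E$ and then applying $T\in L_{un}(E,F)$. You have additionally spelled out the three cases of that theorem, which the paper does not do; your arguments for each case are standard and sound.
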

The preceding proposition implies that if $ T\in L_{un}(E^\delta , F)$, 
then $ T \in L_{un}(E,F)$ whenever $E$ and $F$ are Banach lattices and
$E^\delta$ is a Dedekind completion of $E$.
Deng, Brien and Troitsky in \cite{5}, show that $un$-convergent is topological. For each $\epsilon>0$ and $x\in E^+$ the collections 
$V_{\epsilon, x}=\{y\in E:~\Vert    \vert y\vert\wedge x\Vert<\epsilon\}$ is a base of zero neighbourhoods for a topology, and convergence in this topology agrees with $un$-convergence. We will refer to this topology as $un$-topology. 
An operator $T:E\rightarrow F$ between two Banach lattices is
$un$-continuous if and only if for each subset $A$ of $F$ that is
$un$-open (resp, close), $T^{-1}(A)$ is $un$-open (resp, close) in $E$.
Let $ E , G$ and $ F$ be Banach lattices.
If $ E \xrightarrow{T} G \xrightarrow{S} F $ are $un$-continuous operators, 
clearly $ ST $ is likewise a $un$-continuous operator. Also, by using 
Theorem 2.40 of \cite{1}, if $ T : E \rightarrow F $ preserves disjointness 
and $un$-continuous operator, then $ | T | $ exists and is a
$un$-continuous operator. Now in the following, we give some examples of 
$un$-continuous operators.
\begin{example}
\begin{enumerate}[label=$(\arabic*)$]
\item	Let $B$ be a projection band of Banach lattice $E$ and $ P_{B}$
the corresponding band projection. It follows easily from
$ 0 \leq P_B \leq I $(see Theorem 1.44 of \cite{1}) that if
$ x_\alpha \xrightarrow{un} 0 $ in $ E $ then
$P_B x_\alpha \xrightarrow{un} 0$ in $ B $. Therefore $ P_B $ is a
$un$-continuous operator.
\item	Let $E$ and $F$ be two Banach lattices such that $E$ has
a strong unit. Then, by Theorem 2.3 of \cite{12}, each continuous operator
$ T: E\rightarrow F $ is $un$-continuous.
\end{enumerate}
\end{example}
Recall that a topological space is said to be sequentially compact if every 
sequence has a convergent subsequence. An operator $ T : E \rightarrow F $ 
between two Banach  lattices is said to be (sequentially) $un$-compact if
$ TB_{E} $ ($B_{E}$  is closed unit ball of $E$) is relatively (sequentially) $un$-compact in $ E $. Equivalently, for every bounded net
$ (x_{\alpha})_{\alpha} $ (respectively, every bounded sequence
$(x_{n})_{n}$) its image has a subnet (respectively, subsequence), which is 
$un$-convergent. A net $(x_\alpha)_\alpha$ is $un$-Cauchy if for every
$un$-neighborhood $U$ of zero there exists $\alpha_0$ such that
$x_\alpha - x_\beta \in U$ whenever $\alpha , \beta \geq \alpha_0$.
The order continuous Banach lattice $ E$ is $un$-complete if each
$un$-Cauchy net $(x_\alpha)_\alpha $ of $E$ is $un$-convergent to $x\in E$. 
These concepts have been introduce by Kandic,  Marabeh and Troitsky, see \cite{12}.

Clearly, every  compact operator is both $un$-compact and sequentially
$un$-compact. In general  $un$-compact and  sequentially $un$-compact 
operators are not $un$-continuous and vice versa as shown
by the following example.
\begin{example}\label{ex:no:continuous}
\begin{enumerate}[label=$(\arabic*)$]
\item The operator $ T: \ell^1 \rightarrow \ell^\infty $ defined by 
\begin{equation*}
T(x_1,x_2,\ldots) = (\sum_{i=1}^\infty x_i, \sum_{i=1}^\infty x_i,\ldots),
\end{equation*}
is clearly rank one, and so $ T $ is a compact operator. It follows that
$ T $ is $un$-compact and sequentially $un$-compact.  
If $(e_{n})_{n}$ is the standard basis of $\ell^1$, by Proposition 3.5 of 
\cite{12},  $ e_{n} \xrightarrow{un} 0$ in $\ell^1$. We have
$ T(e_{n}) = (1,1,1,...)$, therefore $ (T(e_{n}))_{n} $ is not
$un$-convergent to 0. Hence $ T $ is $un$-compact but  is not a
$un$-continuous operator.
\item Let $E=L_1[0,1]$. Clearly, the identity operator
$ I: E  \rightarrow E $ is $un$-continuous. Since $ E $ is a $KB$-space,
by Theorem 6.4 of \cite{12}, $ B_{E} $ is $un$-complete.
But since $ E $ is not atomic, by Theorem 7.5 of \cite{12},
$ B_{E} $ is not $un$-compact. Hence $ I $ is not $un$-compact.
\end{enumerate}
\end{example}
A subset $A$ of Banach lattice $E$ is said to be $un$-bounded, if $A$ is 
bounded with respect to $un$-topology. An operator $T:E \rightarrow F$ between 
two Banach lattices is $un$-bounded if $T(A)$ is $un$-bounded in $F$ for 
each $un$-bounded subset $A$ of $E$.

The proof of the following lemma is similar to Theorem 1.15 (b) from \cite{14a}.
\begin{lemma}\label{set:compact:bounded}
Each $un$-compact subset of Banach lattice $E$ is $un$-bounded.
\end{lemma}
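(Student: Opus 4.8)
The plan is to treat this as the standard fact that compact subsets of a topological vector space are bounded, specialized to the $un$-topology. Recall from \cite{5} that the $un$-topology is a linear topology on $E$ whose base of zero-neighbourhoods is given by the sets $V_{\epsilon,x}$. The first step I would carry out is to record that each $V_{\epsilon,x}$ is both balanced and absorbing. Balancedness is immediate: if $|\lambda|\le 1$ then $|\lambda y|=|\lambda|\,|y|\le |y|$, so $\||\lambda y|\wedge x\|\le \||y|\wedge x\|$, whence $\lambda V_{\epsilon,x}\subseteq V_{\epsilon,x}$. For the absorbing property, fix $y\in E$; since $|\tfrac{1}{n}y|\wedge x\le \tfrac{1}{n}|y|$ we get $\|\,|\tfrac{1}{n} y|\wedge x\,\|\le \tfrac{1}{n}\|y\|\to 0$, so $\tfrac{1}{n} y\in V_{\epsilon,x}$ for all large $n$, that is, $y\in nV_{\epsilon,x}$.

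Next I would fix an arbitrary basic $un$-neighbourhood $V_{\epsilon,x}$ of zero; to prove $A$ is $un$-bounded it suffices to show that $A$ is absorbed by $V_{\epsilon,x}$. Using that the $un$-topology is linear, I would choose a balanced open $un$-neighbourhood $W\subseteq V_{\epsilon,x}$ of zero (for instance the interior of $V_{\epsilon,x}$, which is balanced because it contains $0$). Since $W$ is balanced the sets $nW$ increase with $n$, and since $W$ is absorbing we have $E=\bigcup_{n\ge 1} nW$; thus $(nW)_{n\ge 1}$ is an increasing open cover of $E$ in the $un$-topology.

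Now I would invoke the $un$-compactness of $A$: from the open cover $(nW)_n$ extract a finite subcover, and by nestedness conclude $A\subseteq NW$ for a single index $N$. Consequently $A\subseteq NW\subseteq NV_{\epsilon,x}$, and because $W$ is balanced we also obtain $A\subseteq tW\subseteq tV_{\epsilon,x}$ for every $t\ge N$. As $V_{\epsilon,x}$ was an arbitrary basic zero-neighbourhood, $A$ is absorbed by every $un$-neighbourhood of zero, i.e. $A$ is $un$-bounded. If instead one reads ``$un$-compact'' as relatively $un$-compact, I would apply the above to the $un$-closure $\overline{A}$ and note $A\subseteq\overline{A}$.

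The only genuinely delicate point is the appeal to the linear structure of the $un$-topology, namely that balanced open neighbourhoods of zero form a base and that a balanced neighbourhood of zero is absorbing, since this is exactly what makes the Rudin-type ``compact implies bounded'' argument of Theorem 1.15(b) in \cite{14a} go through; everything else reduces to the two elementary lattice-norm estimates above. I therefore expect no real obstacle beyond citing \cite{5} correctly for the vector-topology facts and confirming the balanced/absorbing character of the $V_{\epsilon,x}$.
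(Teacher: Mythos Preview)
Your proposal is correct and matches the paper's approach exactly: the paper gives no explicit proof but simply cites Theorem~1.15(b) of \cite{14a}, and what you have written is precisely that Rudin argument specialized to the $un$-topology. One small remark: your parenthetical ``balanced because it contains $0$'' is not a valid justification on its own---the interior of a balanced neighbourhood of zero in a TVS is balanced because scalar multiplication by nonzero $\lambda$ with $|\lambda|\le 1$ is a homeomorphism---but in fact each $V_{\epsilon,x}$ is already open (via $|z|\wedge x \le |y|\wedge x + |z-y|\wedge x$), so you can simply take $W=V_{\epsilon,x}$ and avoid the issue.
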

Also, we need the following lemma that is a specific case of Theorem 1.30 of \cite{14a}.
\begin{lemma}\label{set:bounded:equivalent}
	Let $E$ be a Banach lattice and $A\subset E$. The following are equivalent:
	\begin{enumerate}[label=$(\arabic*)$]
	\item $A$ is $un$-bounded.
    \item If $(x_n)_n \subset A$ and $({\alpha}_n)_n $
    is a sequence of scalars such that $\alpha_n \rightarrow 0$ as
    $n\rightarrow\infty$, then ${\alpha}_n x_n \xrightarrow{un}0$ as
    $n\rightarrow\infty$.
	\end{enumerate}
\end{lemma}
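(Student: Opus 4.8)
The plan is to exploit the fact, recorded just above the lemma, that $un$-convergence is induced by a linear topology for which the sets $V_{\epsilon,u}$ (with $\epsilon>0$ and $u\in E^+$) form a base of neighbourhoods of zero; the statement is then the $un$-incarnation of the classical characterisation of bounded sets in a topological vector space, which is precisely the content of Theorem 1.30 of \cite{14a}. The one structural fact I would isolate first is that each $V_{\epsilon,u}$ is \emph{balanced}: if $y\in V_{\epsilon,u}$ and $|t|\leq 1$, then $|ty|\wedge u=|t|\,|y|\wedge u\leq |y|\wedge u$, so $\big\||ty|\wedge u\big\|\leq \big\||y|\wedge u\big\|<\epsilon$, whence $ty\in V_{\epsilon,u}$. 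This balancedness is what lets scalar multiplication interact cleanly with the base, and it is the only lattice-specific ingredient in the argument.

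For $(1)\Rightarrow(2)$, I would take a sequence $(x_n)_n\subseteq A$ and scalars $\alpha_n\to 0$, and verify $\alpha_n x_n\xrightarrow{un}0$ by testing membership in an arbitrary basic neighbourhood $V_{\epsilon,u}$. Since $A$ is $un$-bounded, it is absorbed by $V_{\epsilon,u}$, so there is $\lambda>0$ with $A\subseteq \lambda V_{\epsilon,u}$, i.e. $\lambda^{-1}x_n\in V_{\epsilon,u}$ for every $n$. Choosing $N$ with $|\alpha_n|\leq \lambda^{-1}$ for $n\geq N$, I would write $\alpha_n x_n=(\lambda\alpha_n)(\lambda^{-1}x_n)$ with $|\lambda\alpha_n|\leq 1$ and invoke balancedness to conclude $\alpha_n x_n\in V_{\epsilon,u}$ for all $n\geq N$. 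As $V_{\epsilon,u}$ was arbitrary, $\alpha_n x_n\xrightarrow{un}0$.

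For $(2)\Rightarrow(1)$ I would argue by contraposition. If $A$ is not $un$-bounded, then $A$ fails to be absorbed by some zero-neighbourhood, and since the $V_{\epsilon,u}$ form a base I may take the witnessing neighbourhood to be a basic one $V_{\epsilon,u}$; thus $A\not\subseteq nV_{\epsilon,u}$ for every $n\in\mathbb{N}$. Picking $x_n\in A\setminus nV_{\epsilon,u}$ gives $\tfrac{1}{n}x_n\notin V_{\epsilon,u}$, so with $\alpha_n=1/n\to 0$ the sequence $\alpha_n x_n$ stays outside $V_{\epsilon,u}$ and cannot $un$-converge to $0$, contradicting $(2)$.

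I do not expect a serious obstacle here: the substance is the general topological-vector-space fact, and the only point requiring care is confirming that the $un$-neighbourhood base is balanced so that the standard scaling argument transfers verbatim. Beyond that, the main thing to watch is using the neighbourhood-base (absorption) description of $un$-boundedness consistently in both directions, rather than silently switching to the convergence description.
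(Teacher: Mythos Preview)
Your proposal is correct and follows exactly the route the paper indicates: the paper does not give an independent proof but simply records (just before the lemma) that it is a specific case of Theorem~1.30 of \cite{14a}, and your argument is precisely that specialisation, with the balancedness of the basic $un$-neighbourhoods $V_{\epsilon,u}$ made explicit so that Rudin's scaling argument applies verbatim. There is nothing to add or correct.
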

\begin{proposition}\label{operator:properties}
	Let  $T:E \rightarrow F$ be a $un$-continuous operator between two Banach lattices. Then we have the following assertions.
\begin{enumerate}[label=$(\arabic*)$]
\item $T$ is a $un$-bounded operator.
\item If $A\subseteq E$ is a $un$-compact set, then $T(A)$ is a $un$-compact set in $F$.
\end{enumerate}	
\end{proposition}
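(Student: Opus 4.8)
The plan is to treat the two assertions separately, relying on the sequential characterisation of $un$-boundedness in Lemma~\ref{set:bounded:equivalent} for the first part and on the topological description of $un$-continuity for the second.

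For assertion $(1)$, I would fix a $un$-bounded subset $A \subseteq E$ and show that $T(A)$ is $un$-bounded by verifying condition $(2)$ of Lemma~\ref{set:bounded:equivalent}. So let $(y_n)_n \subseteq T(A)$ and let $(\alpha_n)_n$ be scalars with $\alpha_n \to 0$; choosing $x_n \in A$ with $Tx_n = y_n$, the $un$-boundedness of $A$ together with Lemma~\ref{set:bounded:equivalent} gives $\alpha_n x_n \xrightarrow{un} 0$ in $E$. Since $T$ is linear, $\alpha_n y_n = T(\alpha_n x_n)$, and the $un$-continuity of $T$ yields $T(\alpha_n x_n) \xrightarrow{un} 0$, hence $\alpha_n y_n \xrightarrow{un} 0$ in $F$. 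Applying Lemma~\ref{set:bounded:equivalent} in the reverse direction shows that $T(A)$ is $un$-bounded, and as $A$ was an arbitrary $un$-bounded set, $T$ is a $un$-bounded operator.

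For assertion $(2)$, I would use that $un$-continuity of $T$ is precisely continuity of $T$ viewed as a map between $E$ and $F$ equipped with their respective $un$-topologies, as recorded in the topological characterisation stated just before this proposition. Since the continuous image of a compact set is compact in any topological space, $T(A)$ is $un$-compact whenever $A$ is. Equivalently, working directly with nets: given a net $(y_\beta)_\beta$ in $T(A)$, write $y_\beta = T x_\beta$ with $x_\beta \in A$; the $un$-compactness of $A$ supplies a subnet with $x_{\beta_\gamma} \xrightarrow{un} x \in A$, and $un$-continuity gives $T x_{\beta_\gamma} \xrightarrow{un} Tx \in T(A)$, so every net in $T(A)$ has a subnet that is $un$-convergent to a point of $T(A)$.

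The steps are routine; the only points requiring care are the use of linearity of $T$ together with the passage to preimages in part $(1)$, and, in part $(2)$, the observation that the continuous-image-of-compact argument does not require the $un$-topology to be Hausdorff (which in general it need not be). I therefore expect no serious obstacle beyond correctly invoking Lemma~\ref{set:bounded:equivalent} in both directions and keeping track of which convergence lives in $E$ and which in $F$.
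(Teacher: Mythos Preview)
Your proposal is correct and follows essentially the same route as the paper: part~(1) is handled via Lemma~\ref{set:bounded:equivalent} exactly as the authors do, and your net argument for part~(2) is identical to theirs. Your additional remarks---the purely topological ``continuous image of compact is compact'' viewpoint and the observation that Hausdorffness is irrelevant---are sound but not used in the paper; note also that your insistence on $x\in A$ (hence $Tx\in T(A)$) is actually slightly more careful than the paper's own proof, which only records $x\in E$.
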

\begin{proof}
\begin{enumerate}[label=$(\arabic*)$]
\item Let $A$ be a $un$-bounded subset of $E$.
By Lemma \ref{set:bounded:equivalent}
it is enough to show that for 
each $(Tx_n)_n \subset T(A) $ and $({\alpha}_n)_n \subset \mathbb{R} $ 
such that ${\alpha}_n \rightarrow 0$ as $n \rightarrow \infty$ we have 
${\alpha}_nTx_n \xrightarrow{un} 0$ as $n \rightarrow \infty$.
Let $(x_n)_n \subset A $ and $({\alpha}_n)_n \subset \mathbb{R} $ such that
${\alpha}_n \rightarrow 0$ as $n \rightarrow \infty$.
It follows from $un$-boundedness of $A$ and Lemma 
\ref{set:bounded:equivalent} that
${\alpha}_n x_n \xrightarrow{un}0$ as $n\rightarrow \infty$.
Since $T$ is $un$-continuous we have
$ {\alpha}_n T(x_n) = T({\alpha}_nx_n)  \xrightarrow{un}0$.
Therefore, the proof is complete.	
\item Let  $(y_\alpha)_\alpha$ be a net in $T(A)$. Then there exists net
	$(x_\alpha)_\alpha \subseteq A$   such that 
	$T(x_\alpha) = y_\alpha$ for all $\alpha$.
It follows from $un$-compactness of $A$ that there exists a subnet
$(x_{\alpha_\beta}) $  of  $(x_\alpha)_\alpha$ with
$x_{\alpha_\beta}\xrightarrow{un}x$ in $E$.
Thus, $T(x_{\alpha_\beta}) \xrightarrow{un}Tx$ in $F$.
Hence $T(A)$ is a $un$-compact set.
\end{enumerate}
\end{proof}
\begin{proposition}\label{p:3.7} 
Assume that  $E$ and $F$ are two Banach lattices and $E$ has quasi-interior point.  $T:E\rightarrow F$ is a $un$-bounded operator if and only if $T$ is $\sigma$-$un$-continuous.
\end{proposition}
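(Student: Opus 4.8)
The plan is to establish the two implications separately. Lemma~\ref{set:bounded:equivalent} will serve as the dictionary translating $un$-boundedness into statements about multiplying sequences by null scalars, and the quasi-interior point will be used only to reduce $un$-convergence of sequences to a single seminorm.

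For the implication ``$\sigma$-$un$-continuous $\Rightarrow$ $un$-bounded'' I would argue directly, and this direction does not even require the quasi-interior point. Let $A\subseteq E$ be $un$-bounded and take any $(x_n)_n\subseteq A$ together with scalars $\gamma_n\to 0$. By Lemma~\ref{set:bounded:equivalent}, $\gamma_n x_n\xrightarrow{un}0$, so $(\gamma_n x_n)_n$ is a $un$-null sequence; $\sigma$-$un$-continuity of $T$ then gives $T(\gamma_n x_n)=\gamma_n Tx_n\xrightarrow{un}0$. Since $(x_n)_n$ and $(\gamma_n)_n$ were arbitrary, the converse direction of Lemma~\ref{set:bounded:equivalent} shows that $T(A)$ is $un$-bounded, i.e.\ $T$ is a $un$-bounded operator.

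The substantive direction is ``$un$-bounded $\Rightarrow$ $\sigma$-$un$-continuous'', and here the quasi-interior point $e$ is essential. Because $e$ is a quasi-interior point, $un$-convergence of sequences is governed by the single seminorm $\rho(x):=\| \,|x|\wedge e\, \|$; that is, $x_n\xrightarrow{un}0$ if and only if $\rho(x_n)\to 0$ (see \cite{12}). Given $x_n\xrightarrow{un}0$, I would rescale the sequence. Using that $\beta|x|\wedge e\le \beta(|x|\wedge e)$ whenever $\beta\ge 1$ (since $\beta e\ge e$), monotonicity of the norm yields the homogeneity estimate $\rho(\beta x)\le\beta\,\rho(x)$ for $\beta\ge 1$. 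Choosing $\beta_n:=\rho(x_n)^{-1/2}$ on the indices where $\rho(x_n)>0$ (and $\beta_n:=n$ where $\rho(x_n)=0$) gives $\beta_n\to\infty$ while $\rho(\beta_n x_n)\le \beta_n\rho(x_n)=\sqrt{\rho(x_n)}\to 0$, so that $\beta_n x_n\xrightarrow{un}0$.

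With the rescaled sequence in hand the argument closes quickly. The $un$-null sequence $(\beta_n x_n)_n$ is $un$-bounded, so by hypothesis its image $\{\beta_n Tx_n:n\}=T(\{\beta_n x_n:n\})$ is $un$-bounded in $F$. Applying Lemma~\ref{set:bounded:equivalent} to this image set with the null scalars $1/\beta_n\to 0$ gives $\tfrac{1}{\beta_n}(\beta_n Tx_n)=Tx_n\xrightarrow{un}0$, which is exactly the conclusion that $T$ is $\sigma$-$un$-continuous. I expect the only delicate point to be this rescaling step: producing a sequence $\beta_n\to\infty$ that still keeps $\beta_n x_n$ $un$-null is precisely the place where metrizability of the $un$-topology—guaranteed by the quasi-interior point through the single seminorm $\rho$—cannot be dispensed with, which is also why the hypothesis on $E$ enters only in this implication.
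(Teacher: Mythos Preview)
Your proof is correct and follows the same strategy as the paper: both directions hinge on Lemma~\ref{set:bounded:equivalent}, and the substantive implication proceeds by rescaling a $un$-null sequence $(x_n)$ by scalars $\beta_n\to\infty$ so that $(\beta_n x_n)$ remains $un$-null, then pushing through $T$ and dividing back. The only difference is in how the rescaling is produced: the paper invokes metrizability of the $un$-topology (Theorem~3.2 of \cite{12}) together with the general topological-vector-space fact from Rudin's Theorem~1.28(b), whereas you construct $\beta_n$ explicitly from the gauge $\rho(x)=\|\,|x|\wedge e\,\|$; this is a shade more self-contained, though calling $\rho$ a \emph{seminorm} is a minor slip (it need not be subadditive), and your argument does not actually use that property.
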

\begin{proof}
It follows from Proposition \ref{operator:properties} that $T$ is  a $un$-bounded operator whenever $T$ is a $\sigma$-$un$-continuous.

Conversely, let $(x_n)_n\subseteq E$ and $x_n\xrightarrow{un} 0$. 
By Theorem 3.2 from  \cite{12},   $E$ is metrizable, and so by Theorem 1.28(b) of \cite{14a}, there exists a sequence $(\alpha_n) \subset \mathbb{R}^+$ such that
$\alpha_n\rightarrow +\infty$ and $\alpha_n x_n \xrightarrow{un}0$.
Obviously,   $(\alpha_n x_n )$ is $un$-bounded. Therefore $(T(\alpha_n x_n ))$ is $un$-bounded. By Lemma \ref{set:bounded:equivalent} we have $Tx_n=\frac{1}{\alpha_n}T( \alpha_n x_n)\xrightarrow{un} 0$. Therefore, $T$ is $\sigma$-$un$-continuous.
\end{proof}
There is a $un$-compact operator which is not $un$-bounded. Let $T:\ell^1\rightarrow \ell^\infty$  be the $un$-compact operator in  Example \ref {ex:no:continuous}. But, since $\ell^1$ has quasi-interior point and $ T $ is not a $\sigma$-$un$-continuous operator,  by Proposition \ref{p:3.7}, $T$ is not $un$-bounded.

\begin{proposition} Assume that  $E$ and $F$ are two Banach lattices.
\begin{enumerate}[label=$(\arabic*)$]
\item Let $ F $ has a strong unit and let $ T:E \rightarrow F $ be a 
sequentially $un$-compact  operator. Then $ T^\prime $ is both sequentially 
$un$-compact and $un$-compact.
\item If $ T^\prime : F^\prime \rightarrow E^\prime $ is a sequentially
$un$-compact  operator and $ E ^\prime $ has a strong unit, then $ T $ is 
both sequentially $un$-compact and  $un$-compact.
\end{enumerate}
\end{proposition}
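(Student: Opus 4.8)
The plan is to reduce both statements to Schauder's theorem on compactness of adjoints, by exploiting the fact that a strong unit forces the $un$-topology to collapse onto the norm topology. The single fact I would isolate first is the following lemma: if a Banach lattice $G$ has a strong unit $e$, then on $G$ a net is $un$-null if and only if it is norm-null. One direction is automatic, since $0\le |x_\alpha|\wedge u\le |x_\alpha|$ gives $\||x_\alpha|\wedge u\|\le\|x_\alpha\|$, so norm convergence always implies $un$-convergence. For the converse I would pass to the order-unit norm $\|\cdot\|_e$, which is equivalent to the given norm because $e$ is a strong unit; under $\|\cdot\|_e$ the space is an $AM$-space with unit, hence lattice isometric to some $C(K)$ with $e\mapsto \mathbf 1$, and there $\||x|\wedge \mathbf 1\|_\infty=\min(\|x\|_\infty,1)$. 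Taking $u=e$ in the definition of $un$-convergence then shows $\min(\|x_\alpha\|_e,1)\to 0$, which forces $\|x_\alpha\|_e\to 0$ and hence $\|x_\alpha\|\to 0$. This is essentially the content behind Theorem 2.3 of \cite{12} that was invoked earlier in the paper.

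Granting this lemma, Part $(1)$ proceeds as follows. Since $F$ has a strong unit, $un$-convergence and norm convergence agree on $F$, so the hypothesis that $T$ is sequentially $un$-compact says precisely that every bounded sequence $(x_n)_n$ in $E$ admits a subsequence $(Tx_{n_k})_k$ converging in norm. Thus $TB_E$ is relatively sequentially norm-compact, and since the norm topology is metrizable this is the same as $TB_E$ being relatively norm-compact; that is, $T$ is a compact operator. By Schauder's theorem $T'$ is compact as well, and since every compact operator is both $un$-compact and sequentially $un$-compact (as noted just before the statement), $T'$ enjoys both properties.

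Part $(2)$ is the dual argument. Because $E'$ has a strong unit, the lemma gives that $un$-convergence equals norm convergence on $E'$, so the sequential $un$-compactness of $T':F'\rightarrow E'$ means exactly that $T'$ is a compact operator. Applying the converse half of Schauder's theorem, $T$ is compact, and therefore $T$ is both $un$-compact and sequentially $un$-compact. I expect the only delicate point to be the key lemma, namely verifying carefully that a strong unit collapses the $un$-topology to the norm topology (in particular the identity $\||x|\wedge e\|_e=\min(\|x\|_e,1)$ and the equivalence of the two norms); once that collapse is in hand, both assertions follow immediately from metrizability and Schauder's theorem.
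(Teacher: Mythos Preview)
Your proposal is correct and follows essentially the same route as the paper: in both parts the strong-unit hypothesis collapses $un$-convergence to norm convergence (the paper cites this as Theorem~2.3 of \cite{12}, while you sketch a proof via the $C(K)$ representation), so sequential $un$-compactness becomes ordinary compactness, and Schauder's theorem (Theorem~5.2 of \cite{1} in the paper) finishes the argument. The only cosmetic difference is that you make the metrizability step explicit when passing from relative sequential compactness to relative compactness.
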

\begin{proof}
\begin{enumerate}[label=$(\arabic*)$]
\item	Let $ (x_{n})_{n} $ be a bounded sequence in $ E $. Then by the 
assumption $ (T(x_{n}))_{n} $ has a subsequence, which is $un$-convergent. 
Now by Theorem 2.3 of \cite{12}, $ (T(x_{n}))_{n} $ is norm-convergent and 
therefore $ T $ is a compact operator. Now by Theorem 5.2   of \cite{1},
$ T^\prime $ is  compact and therefore it is both sequentially
$un$-compact and $un$-compact.
\item Proof is similar to (1).
\end{enumerate}	
\end{proof}


Recall that, for every ideal $ I $ of a vector lattice $ E $, the vector 
space $ \dfrac{E}{I} $ is a vector lattice (see page 99 from \cite{1}).   
\begin{theorem}
	Let $ T : E \rightarrow F $ be an  operator between two 
	Banach lattices  and $T(E^+) = F^+ $. If $\ker(T)$ is an ideal of $ E $, 
	then $ T $ is $un$-continuous.
\end{theorem}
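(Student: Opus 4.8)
The plan is to show that the two hypotheses force $T$ to be a surjective lattice homomorphism, and then to read off $un$-continuity directly from the lattice-homomorphism identities together with the automatic continuity of positive operators between Banach lattices.

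First I would record two immediate consequences of $T(E^+)=F^+$. Since $T(E^+)\subseteq F^+$, the operator $T$ is positive; and since every $f\in F$ can be written as $f=f^+-f^-$ with $f^+,f^-\in F^+=T(E^+)$, the operator $T$ is onto. Being a positive operator between Banach lattices, $T$ is automatically continuous (see \cite{1}); I denote its norm by $\|T\|$.

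Next comes the heart of the argument: showing that $T$ is a lattice homomorphism. Writing $I=\ker(T)$, which is an ideal by hypothesis, the quotient $E/I$ is a vector lattice and the canonical map $q:E\to E/I$ is a surjective lattice homomorphism with $(E/I)^+=q(E^+)$. Because $\ker(T)=I$, the map $T$ factors as $T=\tilde T\circ q$ for a unique injective positive operator $\tilde T:E/I\to F$; and since $T$ is onto, $\tilde T$ is in fact a bijection. Moreover $\tilde T\big((E/I)^+\big)=\tilde T\big(q(E^+)\big)=T(E^+)=F^+$, so the bijection $\tilde T$ carries the positive cone onto the positive cone; hence $\tilde T^{-1}(F^+)=(E/I)^+$, i.e.\ $\tilde T^{-1}$ is positive as well. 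A linear bijection that is positive together with its inverse is an order isomorphism, and therefore a lattice isomorphism. Consequently $T=\tilde T\circ q$ is a composition of lattice homomorphisms, hence a lattice homomorphism; in particular $|Tx|=T|x|$ and $T(a\wedge b)=Ta\wedge Tb$ for all $x\in E$ and $a,b\in E^+$. I expect this factorization step --- verifying that the induced map on the quotient is a lattice isomorphism --- to be the main obstacle, since everything else is formal.

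Finally I would deduce $un$-continuity. Let $(x_\alpha)_\alpha\subseteq E$ with $x_\alpha\xrightarrow{un}0$, and fix $v\in F^+$. Using surjectivity onto the cone, choose $u\in E^+$ with $Tu=v$. The lattice-homomorphism identities give
\[
|Tx_\alpha|\wedge v=T|x_\alpha|\wedge Tu=T\big(|x_\alpha|\wedge u\big),
\]
whence, by continuity of $T$,
\[
\big\||Tx_\alpha|\wedge v\big\|=\big\|T(|x_\alpha|\wedge u)\big\|\le\|T\|\,\big\||x_\alpha|\wedge u\big\|\longrightarrow 0,
\]
since $\big\||x_\alpha|\wedge u\big\|\to 0$ by the definition of $un$-convergence of $(x_\alpha)_\alpha$ to $0$. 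As $v\in F^+$ was arbitrary, $Tx_\alpha\xrightarrow{un}0$, which shows that $T$ is $un$-continuous and completes the plan.
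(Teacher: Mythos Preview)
Your proof is correct and follows essentially the same route as the paper: factor $T$ through the quotient $E/\ker(T)$, verify that the induced bijection is bipositive and hence a lattice isomorphism, and conclude that $T$ is a surjective Riesz homomorphism. The only difference is that the paper's proof ends by invoking the fact that every surjective Riesz homomorphism is $un$-continuous, whereas you supply the explicit one-line argument for this (pick a preimage $u\in E^+$ of $v\in F^+$ and use $|Tx_\alpha|\wedge v=T(|x_\alpha|\wedge u)$ together with norm-continuity of $T$); this makes your write-up slightly more self-contained.
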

\begin{proof}
At first it is clear that $T$ is surjective.
On the other hand, as kernel of $ T $ is an ideal of $ E $, by Theorem 2.22 of 
\cite{1}, the quotient vector space $ \frac{E}{\ker(T)}$ is a 
vector lattice and the operator $ S : E \rightarrow \frac{E}{\ker(T)}$ 
defined by $ S(x) = x+\ker(T) $ is a Riesz homomorphism. 
Now we define the operator $ K : \frac{E}{\ker(T)} \rightarrow F $ 
via $ K(x+\ker(T)) = Tx$. It is clear that $ K $ is well defined, 
one-to-one operator and $ T = KS$. Let $y\in F $    
and  $x\in E $ such that  $ y = T(x) = K(x+\ker(T))$. It follows that $ K $ is a 
surjective operator. Now if  $ x+\ker(T) \geq 0$,
then $ |x+\ker(T)| = x+\ker(T)$. On the other hand, since $ S $ is 
Riesz homomorphism, by Theorem 2.14 of \cite{1},
we have $ S(|x|) = |S(x)|$. Therefore $ |x|+\ker(T) = |x+\ker(T)| = x+\ker(T) $
and hence $ K(x+\ker(T)) = K(|x|+\ker(T)) = T(|x|) \geq 0$. Thus $ K $ is 
a positive operator. Let $ y \in F^+ $. Since $ T(E^+) = F^+ $, 
it follows that there exists $ x \in E^+ $ that $ y = T(x) = K(x+\ker(T))$.
It follows from $ K^{-1}(y) = K^{-1}(K(x+\ker(T))) = x+\ker(T) $ and
$|x+\ker(T)| = |x|+\ker(T) = x+\ker(T)$ that $ K^{-1}$ is a 
positive operator. Now by Theorem 2.15 of \cite{1}, $ K $ is a 
Riesz homomorphism.
It follows that $T$ is
Riesz homomorphism. Now, the proof is complete by this fact that
each surjective Riesz homomorphism is $un$-continuous.
\end{proof}
Let $E$ be a vector lattice and $E^{\sim \sim}$ be the bidual of $E$. 
Recall that a subset $A$ of $E$ is $b$-order bounded in $E$ 
if $A$ is order bounded in $E^{\sim\sim}$.
\begin{theorem}
	Let $E$ be a Banach lattice with order continuous norm and $F$ be a 
	Dedekind $\sigma$-complete Banach lattice such that the norm of $F$ is 
	not order continuous. If each operator $T:E\rightarrow F$ is
	$un$-continuous, then $E$ is $KB$-space.
\end{theorem}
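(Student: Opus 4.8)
The plan is to argue by contradiction: assuming $E$ is \emph{not} a $KB$-space, I will produce a single operator $T:E\to F$ that fails to be $un$-continuous, contradicting the hypothesis. The construction rests on two structural dichotomies. On the domain side, since $E$ has order continuous norm but is not a $KB$-space, $E$ contains a sublattice lattice-isomorphic to $c_{0}$ (the $c_{0}$-characterisation of $KB$-spaces, Theorem 4.60 of \cite{1}); write $J:c_{0}\to E$ for this lattice embedding and put $u_{n}=Je_{n}$, so that $(u_{n})$ is a positive disjoint sequence with $\|u_{n}\|$ bounded away from $0$ and $\infty$. On the range side, since $F$ is Dedekind $\sigma$-complete with a norm that is not order continuous, the failure of order continuity furnishes a disjoint order bounded sequence $(f_{n})\subseteq F^{+}$, say $0\le f_{n}\le v$, with $\inf_{n}\|f_{n}\|=:\epsilon>0$ (the standard disjoint-sequence description of non-order-continuity, \cite{1}).

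The single place where order continuity of the norm of $E$ enters is the claim that $u_{n}\xrightarrow{un}0$. Indeed, the $u_{n}$ are positive and pairwise disjoint, hence $uo$-null by Corollary 3.6 of \cite{8}; since $u_{n}\wedge u\in[0,u]$ is order bounded for each $u\in E^{+}$ and the norm of $E$ is order continuous, the resulting $o$-convergence forces $\|u_{n}\wedge u\|\to 0$, i.e.\ $u_{n}\xrightarrow{un}0$ in $E$. Thus $(u_{n})$ is a legitimate $un$-null test sequence.

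Next I would manufacture the operator in two bounded pieces. Because $\ell^{\infty}$ is injective (equivalently, by extending, via Hahn--Banach, the uniformly bounded coefficient functionals $\phi_{n}\in E'$ of the $c_{0}$-copy, with $\phi_{n}(u_{m})=\delta_{nm}$), the bounded map $\overline{\mathrm{span}}(u_{n})\xrightarrow{J^{-1}}c_{0}\hookrightarrow\ell^{\infty}$ extends to a bounded operator $S:E\to\ell^{\infty}$ given by $Sx=(\phi_{n}(x))_{n}$, with $Su_{n}=e_{n}$. Using that $F$ is Dedekind $\sigma$-complete and that $(f_{n})$ is disjoint and order bounded, the assignment $(\lambda_{n})_{n}\mapsto \sup_{N}\sum_{n\le N}\lambda_{n}^{+}f_{n}-\sup_{N}\sum_{n\le N}\lambda_{n}^{-}f_{n}$ defines a bounded positive operator $K:\ell^{\infty}\to F$ with $Ke_{n}=f_{n}$ and $\|K\lambda\|\le\|\lambda\|_{\infty}\|v\|$. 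Setting $T=K\circ S:E\to F$ gives a bona fide bounded operator with $Tu_{n}=f_{n}$.

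Finally I would read off the contradiction: $u_{n}\xrightarrow{un}0$ in $E$, yet $Tu_{n}=f_{n}$ satisfies $\|f_{n}\wedge v\|=\|f_{n}\|\ge\epsilon$ (since $0\le f_{n}\le v$), so $(f_{n})$ is not $un$-null in $F$. Hence $T$ is not $un$-continuous, contradicting the assumption that every operator $E\to F$ is $un$-continuous; therefore $E$ is a $KB$-space. The main obstacle, and the step needing the most care, is the construction and boundedness of $T$: one must simultaneously extract the $c_{0}$-sublattice with its uniformly bounded biorthogonal functionals on the $E$ side (so that $S$ genuinely lands in $\ell^{\infty}$) and realise the disjoint order bounded sequence in $F$ as $Ke_{n}$ with $K$ bounded, which is exactly where the Dedekind $\sigma$-completeness of $F$ is indispensable. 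Everything else is routine verification.
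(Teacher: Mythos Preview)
Your proof is correct and follows the same overall architecture as the paper's: argue by contradiction, manufacture a bounded operator that factors through $\ell^{\infty}$, and test it on a disjoint (hence $un$-null, by order continuity of $E$) sequence whose image fails to be $un$-null in $F$. The differences are in the specific ingredients. On the domain side, the paper invokes Lemmas~2.1 and~3.4 of \cite{2} to obtain directly a normalized disjoint sequence $(x_n)\subseteq E^{+}$ together with \emph{positive} disjoint biorthogonal functionals $(g_n)\subseteq E'$, and then sets $S(x)=(g_n(x))_n\in\ell^{\infty}$; you instead use the $c_0$-characterisation of $KB$-spaces and extend the coefficient functionals by Hahn--Banach (losing positivity of the $\phi_n$, which is harmless here). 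On the range side, the paper quotes Meyer-Nieberg (Corollary~2.4.3 of \cite{13}) to embed $\ell^{\infty}$ into $F$ as a complemented subspace and simply composes with that embedding; you rebuild this map by hand from a disjoint order bounded sequence $(f_n)\subseteq[0,v]$ with $\inf\|f_n\|>0$, using Dedekind $\sigma$-completeness to form the suprema. Your explicit construction has the virtue that the final step---$\|\,|Tu_n|\wedge v\,\|=\|f_n\|\ge\epsilon$---is immediate, whereas the paper's version must (implicitly) argue that non-$un$-convergence in the $\ell^{\infty}$-copy persists in $F$. In short: same strategy, slightly more elementary and self-contained execution on your side, slightly shorter by citation on the paper's side.
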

\begin{proof}
	By way of contradiction,  suppose that $E$ is not a $KB$-space.
	Then by  Lemmas 2.1 and 3.4 of \cite{2}, there exists a $b$-order 
	bounded disjoint sequence $(x_n)_n$ of $ E^+$ such that $\|x_n\|=1$ for 
	all $n$, and there exists a positive disjoint $(g_n)_n$ of $E^\prime$ 
	with $\|g_n\| \leq 1$ such that $g_n (x_n)=1 $ for all $n$,
	and $g_n(x_m)=0$ for $n\neq m$.
	Now we consider the operator $ S:E\rightarrow \ell^\infty $ defined by 
	$ S(x) = (g_k(x))_{k=1}^\infty$ for all $x \in E$.
	Since $(x_n)_n$ is disjoint,  by Corollary 3.6 of \cite{8},  it is
	$uo$-null. Hence $x_n \xrightarrow{uo} 0$ in $ E$ and therefore
	$x_n \xrightarrow{un} 0$ in $E$. It follows that
	$\|S(x_n)\| = \|(g_k(x_n))_{k=1}^{\infty}\| \geq g_n (x_n)=1$.
	By Theorem 2.3 of \cite{12} $S(x_n)\nrightarrow 0$ with respect to
	$un$-topology in $\ell^\infty$. Therefore $S$ is not $un$-continuous.
	On the other hand, since the norm of $F$ is not order continuous and
	$F$ is Dedekind $\sigma$-complete, by Corollary 2.4.3 of \cite{13},
	$ F$ contains a complemented copy of $\ell^\infty$. 
	Now we consider the composed operator
	$T= I\circ S :E \rightarrow \ell^\infty \rightarrow F$, where $I$
	is the canonical injection of $\ell^\infty$ into $F$. This operator
	is not $un$-continuous which is impossible, and so the proof follows.
\end{proof}

%
%

%

\end{document}